\theoremstyle{definition}
\newtheorem{definition}{Definition}[section]
\newtheorem{remark}[definition]{Remark}
\theoremstyle{plain}
\newtheorem{lemma}[definition]{Lemma}
\newtheorem{theorem}[definition]{Theorem}
\newcommand{\PP}{\mathbb{P}}
\begin{document}

\title[The $3 \times 3 \times 3$ hyperdeterminant]
{The $3 \times 3 \times 3$ hyperdeterminant as a polynomial in the fundamental invariants
for $SL_3(\mathbb{C}) \times SL_3(\mathbb{C}) \times SL_3(\mathbb{C})$}

\author[Bremner]{Murray Bremner}
\address[Bremner]{Department of Mathematics and Statistics, University of Saskatchewan, Saskatoon, SK, Canada}
\email{bremner@math.usask.ca}

\author[Hu]{Jiaxiong Hu}
\address[Hu]{Department of Mathematics, Simon Fraser University, Burnaby, BC, Canada}
\email{hujiaxiong@gmail.com}

\author[Oeding]{Luke Oeding}
\address[Oeding]{Department of Mathematics, University of California, Berkeley, CA, USA}
\curraddr{Department of Mathematics and Statistics, Auburn University, AL, USA}
\email{oeding@auburn.edu}

\subjclass{Primary 13A50; Secondary 15A72, 17B10}

\keywords{Hyperdeterminants, fundamental invariants, projective spaces, tensor products, Segre embeddings,
projective dual varieties, normal forms, rational reconstruction}

\thanks{Murray Bremner was supported by a Discovery Grant from NSERC.
Luke Oeding was supported by NSF RTG Award \# DMS-0943745.
The authors thank the two anonymous referees for suggestions which significantly improved the paper.}

\begin{abstract}
We briefly review previous work on the invariant theory of $3 \times 3 \times 3$ arrays.
We then recall how to generate arrays of arbitrary size $m_1 \times \cdots \times m_k$ with hyperdeterminant 0.
Our main result is an explicit formula for the $3 \times 3 \times 3$ hyperdeterminant as a polynomial
in the fundamental invariants $I_6, I_9, I_{12}$ for the action of the Lie group
$SL_3(\mathbb{C}) \times SL_3(\mathbb{C}) \times SL_3(\mathbb{C})$.
We apply our calculations to Nurmiev's classification of normal forms for $3 \times 3 \times 3$ arrays.
\end{abstract}

\maketitle

\allowdisplaybreaks


\section{Introduction}

The invariant theory of $3 \times 3 \times 3$ arrays, or trilinear forms on a 3-dimensional vector space,
has been studied by many authors;
the earliest reference we have been able to find is the work of Aronhold \cite{A} (1850).
A classification of the forms, and an application of classical invariant theory to this problem,
appears in Chanler and Thrall \cite{C,TC} (1938-39).
These trilinear forms and their invariants are also closely tied to the classical study of 
the Hesse configuration of 12 lines in the projective plane, 
which was addressed with a modern view by Artebani and Dolgachev \cite{AD09} (2009). 
See Dolgachev \cite{Dolgachev} (2012) for a comprehensive view.

The first results using the representation theory of Lie groups were obtained by Vinberg \cite{V} (1976),
who embedded the semisimple Lie algebra
$\mathfrak{sl}_3(\mathbb{C}) \oplus \mathfrak{sl}_3(\mathbb{C}) \oplus \mathfrak{sl}_3(\mathbb{C})$
into an exceptional Lie algebra of type $E_6$, and deduced that the algebra of invariants
is freely generated by homogeneous polynomials of degrees 6, 9 and 12, which we denote respectively by $I_6$, $I_9$ and $I_{12}$.
Strassen \cite[Theorem 4.6]{S} (1983) studied $n \times n \times 3$ arrays ($n$ odd),
showed that the complement of the set of arrays of maximal border rank is a hypersurface, and determined its equation.
For $n = 3$, this polynomial is (up to a scalar multiple) the fundamental invariant of degree 9, which vanishes
if and only if the array has rank $\le 4$; see Table \ref{ranktable} below.
Littelmann \cite{Littelmann} (1989) classified the irreducible representations of semisimple Lie groups
for which the algebra of invariants is free.

Gelfand et al.~(1992) introduced a general theory of hyperdeterminants
(modernizing a study initiated by Cayley \cite{Cayley} (1845) almost 150 years earlier),
and determined their degrees \cite[Corollary 3.9]{GKZ};
in particular, the hyperdeterminant $\Delta_{333}$ for $3 \times 3 \times 3$ arrays
is an invariant homogeneous polynomial of degree 36, and hence can be expressed in terms of the fundamental invariants
as follows, for some $a, b, c, d, e, f, g \in \mathbb{C}$:
  \begin{equation}
  \label{hdformula}
  \Delta_{333} = a I_6^6 + b I_6^4 I_{12} + c I_6^3 I_9^2 + d I_6^2 I_{12}^2 + e I_6 I_9^2 I_{12} + f I_9^4 + g I_{12}^3.
  \tag{HD}
  \end{equation}
(We note that $I_6$ and $I_9$ are uniquely determined up to nonzero scalar multiples, but $I_{12}$ is only determined up to
adding a scalar multiple of $I_6^2$.)

Ng \cite{Ng1,Ng2} (1995) described the orbits of
$PGL(\mathbb{C}^3) \times PGL(\mathbb{C}^3) \times PGL(\mathbb{C}^3)$ acting on the projective space
$\PP( \mathbb{C}^3 \otimes  \mathbb{C}^3 \otimes \mathbb{C}^3 )$,
derived explicit matrix representations for the singular forms,
and proved that the group quotient is a projective variety.
Nurmiev \cite{N} (2000) obtained an implicit description of all three fundamental invariants
in terms of convolutions of volume forms,
and classified the normal forms (orbit representatives).
Briand et al. \cite[eqns.~18, 52, 53]{BLTV} (2004), motivated by a problem from quantum computing,
used Cayley's $\Omega$-process from classical invariant theory to find explicit forms of the fundamental invariants
in terms of concomitants.
Duff and Ferrara \cite{DF} (2007) presented an analogy between supersymmetric black holes in 5 dimensions and
the bipartite entanglement of three qutrits from quantum information theory, where the common symmetry comes from
Vinberg's embedding
$\mathfrak{sl}_3(\mathbb{C}) \oplus \mathfrak{sl}_3(\mathbb{C}) \oplus \mathfrak{sl}_3(\mathbb{C}) \hookrightarrow E_6$.

Explicit computations of these invariants have also been attempted classically.  
For instance $I_{6}$ has been attributed to Aronhold, $I_{9}$ was computed by Strassen \cite{S} (1983) as a determinant of a certain commutator, 
and Ottaviani \cite{Ott07} (2007) gave a determinantal formula.
In theory Schl\"afli's method may be used to compute the $3\times 3 \times 3$ hyperdeterminant, see Gelfand et al.~\cite{GKZbook} (1994); 
this computation turns out to require a large amount of memory, 
but it may still be useful for evaluating the hyperdeterminant and was used by one of us \cite{Oed12} (2012).   
In general, these computations become very large very quickly; 
see Huggins et al.~\cite{HPYY} (2008) for explicit results on the $2 \times 2 \times 2 \times 2$ hyperdeterminant.

Domokos and Drensky \cite[Remark 2(ii)]{DD} (2012) computed the single defining relation of the algebra of
invariants for the action of $SL(3,\mathbb{C}) \times SL(3,\mathbb{C})$ on triples of $3 \times 3$ matrices,
and provided an alternative proof of Vinberg's result.
Allman et al.~\cite{AJRS} (2013) gave a generalization of Cayley's hyperdeterminant for $2 \times 2 \times 2$ arrays to
a covariant for the action of $SL_n(\mathbb{C}) \times SL_n(\mathbb{C}) \times SL_n(\mathbb{C})$ on
$n \times n \times n$ arrays.

Two of the present authors recently used computer algebra \cite{BH} (2013) to determine
the fundamental invariants of degrees 6, 9 and 12
in terms of orbit sums for the symmetry group $( S_3 \times S_3 \times S_3 ) \rtimes S_3$.
Each invariant is a homogeneous polynomial in the variables $x_{ijk}$ for $1 \le i, j, k \le 3$:
  \begin{itemize}
  \item
the polynomial $I_6$ of degree 6 from \cite[Table 5]{BH}, which is a linear combination of 8 symmetric orbits,
has 1152 terms;
  \item
the polynomial $I_9$ of degree 9 from \cite[Table 6]{BH}, which is the sum of 14 alternating orbits,
has 9216 terms;
  \item
the polynomial $I_{12}$ of degree 12 from \cite[Tables 7-11]{BH}, which is a linear combination of 235 symmetric orbits,
has 209061 terms.
  \end{itemize}
In this paper we determine the coefficients $a, b, c, d, e, f, g$ in equation \eqref{hdformula}.
A similar explicit computation of the L\"uroth invariant in terms of fundamental invariants
was recently achieved by Basson et al.~\cite{Basson} (2013), although their task was more difficult:
the algebra of invariants for homogeneous polynomials of degree 4 in three variables is not freely generated.
Interest in L\"uroth quartics was revived a few years ago by Ottaviani and Sernesi \cite{OS1,OS2};
see Ottaviani \cite{Ott12} (2012) for the computational aspects.

In \S \ref{gaussianelimination} we recall a method for generating arrays of hyperdeterminant 0
which applies to any format $m_1 \times \cdots \times m_k$.
In \S \ref{hdsection} we repeatedly generate pseudorandom $3 \times 3 \times 3$ arrays $( x_{ijk} )$
of hyperdeterminant 0, substitute their entries for the variables in the fundamental invariants,
and set the resulting polynomial to 0; in this way,
we obtain linear equations satisfied by the coefficients $a, b, c, d, e, f, g$ in \eqref{hdformula}.
In \S\ref{nurmiev} we apply our results to Nurmiev's classification of normal forms.
A similar computation for the so-called trifocal variety was carried out by one of us
for low degree covariants in \cite{AO} (2012).
In \S\ref{ranksection}, we test and determine which invariants vanish on which ranks.

Our Maple worksheet for the computations described in the present paper
is available as an ancillary file with the \texttt{arXiv} version.


\section{Gaussian elimination: projective version} \label{gaussianelimination}

We recall that a $k$-dimensional array $M = ( m_{i_1 i_2 \cdots i_k} )$ is said to have rank 1 if it is the outer product of nonzero vectors
$a_1, a_2, \dots, a_k$ in the sense that $m_{i_1 i_2 \cdots i_k} = a_{1,i_1} a_{2,i_2} \cdots a_{k,i_k}$.
The array $M$ has rank $r$ if $r$ is the least number such that $M$ can be written as a sum of $r$ arrays of rank 1.
The hyperdeterminant $\Delta = \Delta_{m_1+1, \dots, m_k+1}$ of a $k$-dimensional array of size $(m_1{+}1) \times \cdots \times (m_k{+}1)$
with $\sum_{i}(m_{i}{+}1) \ge 2 m_{j}+1$ for $j = 1, \dots, k$ is (by definition) the polynomial of minimal degree vanishing on the dual variety
of the arrays of rank 1.
(Limits of arrays of rank 1 still have rank $\leq 1$, so the set of rank 1 tensors is closed in projective space.)
If we assume that $\Delta$ is primitive in the sense that its coefficients are integers with no
common factors, then $\Delta$ is unique up to a sign.
Using basic multilinear algebra, we can apply a multilinear change of coordinates, so that the arrays in the dual variety
have zeros on the edges emanating from one corner and arbitrary scalars in the other positions.
With this description, we can easily generate pseudorandom arrays of hyperdeterminant 0 of any size $(m_1{+}1) \times \cdots \times (m_k{+}1)$.
To remove dependence on our choice of coordinates, we can apply pseudorandom changes of basis along each of
the $k$ directions.
However, by definition of invariant, these changes of basis will not affect the values of the fundamental invariants.
So for the purposes of the present paper, this last step will not be necessary.
In this section we review how to perform a multidimensional generalization (i.e.~to tensors) of Gaussian elimination
for matrices, using a projective point of view.

Let $A_1, \dots, A_k$ be vector spaces over $\mathbb{C}$, with $\dim A_i = m_i+1$ for $i = 1, \dots, k$.
The corresponding projective spaces $\PP(A_1), \dots, \PP(A_k)$ have dimensions $m_1, \dots, m_k$;
$\PP(A_i)$ consists of all lines through the origin in $A_i$.
For a nonzero vector $a_i \in A_i$ the corresponding element of $\PP(A_i)$ will be denoted $[ a_i ]$;
this is the line with direction vector $a_i$.
The tensor product $A_1 \otimes \cdots \otimes A_k$ has dimension $(m_1+1) \cdots (m_k+1)$,
and the projective space $\PP( A_1 \otimes \cdots \otimes A_k )$ has dimension
$(m_1+1) \cdots (m_k+1) - 1$.
An element of $A_1 \otimes \cdots \otimes A_k$ is a sum of simple tensors $a_1 \otimes \cdots \otimes a_k$.

\begin{definition}
The \textbf{Segre embedding} is the analogue of the tensor product in the setting of projective spaces:
  \[
  \PP(A_1) \times \cdots \times \PP(A_k) \hookrightarrow \PP( A_1 \otimes \cdots \otimes A_k ),
  \qquad
  ( \, [ a_1 ], \dots, [ a_k ] \, ) \mapsto [ \, a_1 \otimes \cdots \otimes a_k \, ].
  \]
\end{definition}

After a choice of basis, an array of size $(m_1+1) \times \cdots \times (m_k+1)$ can be identified with an element of
$A_1 \otimes \cdots \otimes A_k$.
We denote the image of the Segre embedding by $X$.
Consider an array of rank 1 in $X$, which by definition has the form $[ \, a_1 \otimes \cdots \otimes a_k \, ]$ for
nonzero $a_i \in A_i$.
A parametrized curve in $X$ through the point $[ \, a_1 \otimes \cdots \otimes a_k \, ]$ has the form
$[ \, a_1(t) \otimes \cdots \otimes a_k(t) \, ]$ where $a_i(0) = a_i$ for $i = 1, \dots, k$.
We differentiate with respect to $t$ and then set $t = 0$ to obtain
  \[
  \Big[ \, \sum_{i=1}^k a_1(t) \otimes \cdots \otimes a'_i(t) \otimes \cdots \otimes a_k(t) \, \Big]
  \xrightarrow{\; t = 0 \;}
  \Big[ \, \sum_{i=1}^k a_1 \otimes \cdots \otimes a'_i(0) \otimes \cdots \otimes a_k \, \Big].
  \]
The cone over the tangent plane to $X$ at the point $[ \, a_1 \otimes \cdots \otimes a_k \, ]$
is the span of these tangent vectors:
  \[
  T_{a_1 \cdots a_k}(X) = \sum_{i=1}^k a_1 \otimes \cdots \otimes A_i \otimes \cdots \otimes a_k.
  \]
We convert this sum of vector spaces into a direct sum by separating the common 1-dimensional subspace:
  \begin{equation}
  \label{directsum}
  \begin{array}{l}
  T_{a_1 \cdots a_k}(X)
  =
  \mathbb{C}(a_1 \otimes \cdots \otimes a_k)
  \; \oplus
  \\[3pt]
  \qquad\qquad\qquad
  \bigoplus_{i=1}^k
  ( a_1 \otimes \cdots \otimes A_i \otimes \cdots \otimes a_k ) / \mathbb{C}(a_1 \otimes \cdots \otimes a_k).
  \end{array}
  \end{equation}
We want to construct the projective dual variety $X^\vee$.

\begin{definition}
For an irreducible projective variety $X \subset \PP(V)$, the \textbf{projective dual variety} $X^\vee \subset \PP(V^\vee)$
is the Zariski closure of the set of all tangent hyperplanes; i.e.  hyperplanes $[H] \subset \PP(V^\vee)$ such that $T_{x}(X) \subset H$ for a smooth point $[x] \in X$.
\end{definition}

By a result of Gelfand et al.~\cite{GKZ}, the dual of the Segre variety is a hypersurface if and only if
$\sum_i (m_i{+}1) \ge 2 m_j+1$ for $j = 1, \dots, k$.
Projective duality preserves irreducibility, so in this case the dual of the Segre variety is a hypersurface
defined by a single equation $\Delta = 0$, where the homogeneous polynomial $\Delta$
is known as the hyperdeterminant of type
$(m_1{+}1) \times \cdots \times (m_k{+}1)$; again see \cite{GKZ}.

A hyperplane $[H]$ in projective space $\PP(A_1 \otimes \cdots \otimes A_k)$ can be identified with a normal line $N(H)$
in the dual projective space $\PP( A_1^\ast \otimes \cdots \otimes A_k^\ast )$.
The condition that $H$ contains the tangent plane $T_{a_1 \cdots a_k}(X)$ is equivalent to the condition that
the corresponding line $N(H)$ annihilates $T_{a_1 \cdots a_k}(X)$ in the natural pairing of a variety with its dual.
To work in terms of coordinates, we choose bases:
  \[
  \{ \; a_{i,1} = a_i, \; a_{i,2}, \; \dots, \; a_{i,m_i+1} \; \} \subset A_i
  \quad
  ( i = 1, \dots, k ).
  \]
The general element of $A_1^\ast \otimes \cdots \otimes A_k^\ast$ then has the form
  \begin{equation}
  \label{dualvector}
  \sum_{i_1=1}^{m_1+1} \cdots \sum_{i_k=1}^{m_k+1}
  \mu_{i_1 \dots i_k} \,
  a_{1,i_1}^\ast \otimes \cdots \otimes a_{k,i_k}^\ast \quad (\mu_{i_1 \dots i_k} \in \mathbb{C}).
  \end{equation}
By equation \eqref{directsum}, the general element of $T_{a_1 \cdots a_k}(X)$ has the following form where $d, e_{i,j} \in \mathbb{C}$:
  \begin{equation}
  \label{tangentvector}
  T_{a_1 \cdots a_k}(X)
  =
  d (a_1 \otimes \cdots \otimes a_k)
  +
  \sum_{i=1}^k
  \sum_{j=2}^{m_i+1}
  e_{i,j} ( a_1 \otimes \cdots \otimes a_{i,j} \otimes \cdots \otimes a_k ).
  \end{equation}
We evaluate the dual vector \eqref{dualvector} at the tangent vector \eqref{tangentvector} and obtain
  \[
  d \mu_{1 \dots 1}
  +
  \sum_{i=1}^k
  \sum_{j=2}^{m_i+1}
  e_{i,j} \mu_{1 \dots j \dots 1},
  \]
where the subscript $j$ of $\mu$ is in position $i$.
This must vanish for all $d$ and $e_{i,j}$ which gives the necessary and sufficient condition that
$\mu_{i_1 \dots i_k} = 0$ when $k{-}1$ (or all $k$) of the subscripts $i_1, \dots, i_k$ equal 1.

\begin{lemma}
Let $M = ( \mu_{i_1 \dots i_k} ) \in A_1 \otimes \cdots \otimes A_k$ be a $k$-dimensional array.
The following are equivalent:
  \begin{itemize}
  \item[(i)]
  The hyperdeterminant $\Delta$ of $M$ is 0.
  \item[(ii)]
  The projectivization of $M$ lies in the dual $X^\vee$ of the Segre variety.
  \item[(iii)]
  $M$ is conjugate by a multilinear change of basis in $GL(A_1) \times \cdots \times GL(A_k)$
  to an array in which
  $\mu_{i_1 \dots i_k} = 0$ when $k{-}1$ of the subscripts equal 1.
  \end{itemize}
\end{lemma}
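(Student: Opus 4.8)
The plan is to split the three conditions into the two equivalences (i) $\Leftrightarrow$ (ii) and (ii) $\Leftrightarrow$ (iii), since the geometric heart of the argument—the tangent-space computation recorded in \eqref{directsum}, \eqref{dualvector}, \eqref{tangentvector}—has already been carried out before the statement. The equivalence (i) $\Leftrightarrow$ (ii) I would read off almost directly from the definitions together with the cited result of Gelfand et al.: under the hypothesis $\sum_i(m_i{+}1) \ge 2m_j{+}1$, that result makes $X^\vee$ an irreducible hypersurface with defining equation $\Delta = 0$, so that $\Delta(M) = 0$ holds precisely when $[M] \in X^\vee$. The only point that needs a word is the harmless identification, via the chosen bases, of an array $M$ with the dual vector \eqref{dualvector} whose entries are the $\mu_{i_1\dots i_k}$; under this identification the hyperdeterminant is unchanged, so nothing is lost in passing between $A_1 \otimes \cdots \otimes A_k$ and its dual.

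For the implication (iii) $\Rightarrow$ (ii) I would use the computation directly. The normal form in (iii), with $\mu_{i_1\dots i_k} = 0$ whenever $k{-}1$ of the subscripts equal $1$, is by that computation exactly the statement that the dual vector $M$ annihilates $T_{a_1\cdots a_k}(X)$ at the Segre point $[a_1 \otimes \cdots \otimes a_k]$, where $a_i = a_{i,1}$ are the first vectors of the chosen bases. A multilinear change of basis in $GL(A_1) \times \cdots \times GL(A_k)$ is precisely the datum that passes between the standard bases and the bases $\{ a_{i,j} \}$, so exhibiting the normal form and exhibiting such a tangency point are the same thing. Since every point of the Segre variety $X$ is smooth, $[a_1 \otimes \cdots \otimes a_k]$ is a smooth point and the hyperplane $H$ with normal $M$ is a genuine tangent hyperplane, whence $[M] \in X^\vee$.

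The converse (ii) $\Rightarrow$ (iii) is where I expect the only real obstacle. A priori $X^\vee$ is merely the Zariski closure of the set of tangent hyperplanes, so a given $[M] \in X^\vee$ need not obviously be tangent at an honest smooth point; yet without producing such a point $[a_1 \otimes \cdots \otimes a_k]$ one cannot read off the normal form. To resolve this I would pass to the conormal variety $\mathcal{Z}_X = \{ (x, [H]) : x \in X, \ T_x(X) \subseteq H \} \subset \PP(V) \times \PP(V^\vee)$. Because $X$ is smooth everywhere, its tangent spaces form a vector bundle and the incidence condition $T_x(X) \subseteq H$ is closed, so $\mathcal{Z}_X$ is already closed and its (proper) projection to the second factor is $X^\vee$ with no further closure needed. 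Hence every $[M] \in X^\vee$ really is tangent at some point $[a_1 \otimes \cdots \otimes a_k] \in X$; choosing bases with $a_{i,1} = a_i$ and reading the annihilation condition off \eqref{tangentvector} gives the vanishing required in (iii), closing the equivalences.
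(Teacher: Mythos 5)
Your proposal is correct and follows essentially the same route as the paper, which states the lemma as a summary of its preceding discussion: (i) $\Leftrightarrow$ (ii) comes from the cited theorem of Gelfand--Kapranov--Zelevinsky that under the dimension hypothesis $X^\vee$ is an irreducible hypersurface cut out by $\Delta$, and (ii) $\Leftrightarrow$ (iii) comes from the coordinate computation showing that a hyperplane contains $T_{a_1 \cdots a_k}(X)$ exactly when the entries $\mu_{i_1 \dots i_k}$ of its normal vector vanish whenever at least $k{-}1$ subscripts equal $1$. The one place you go beyond the paper --- invoking the conormal variety $\mathcal{Z}_X$ and properness of its projection to conclude that, since the Segre variety is smooth, $X^\vee$ coincides exactly with the set of tangent hyperplanes rather than merely their Zariski closure --- is a legitimate and standard gap-fill: the paper's implicit argument for (ii) $\Rightarrow$ (iii) silently uses this fact, and your justification of it is the right one.
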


\begin{remark}
In the familiar case $k = 2$ and $m_1 = m_2$, a square matrix $M$ is singular if and only if
there exist nonzero vectors $a_1 \in A_1$ and $a_2 \in A_2$ for which $a_1 M = 0$ and $M a_2 = 0$.
For $i = 1, 2$ we choose a basis of $A_i$ which has $a_i$ as its first vector.
With respect to these bases, the matrix $M$ is equivalent to a matrix with 0s in its first row and column,
and no condition on the remaining entries:
  \[
  M \sim
  \left[
  \begin{array}{cccc}
  0 & 0 & \cdots & 0 \\
  0 & \ast & \cdots & \ast \\
  \vdots & \vdots & \ddots & \vdots \\
  0 & \ast & \cdots & \ast
  \end{array}
  \right]
  \]
Thus $\det(M) = 0$ if and only if there exist invertible operators $\alpha_i \in GL(A_i)$ for $i = 1, 2$
such that $\alpha_1 M \alpha_2$ has this form.
\end{remark}


\section{The $3 \times 3 \times 3$ hyperdeterminant} \label{hdsection}

\begin{theorem} \label{maintheorem}
The $3 \times 3 \times 3$ hyperdeterminant has the following explicit form in terms of the fundamental invariants
$I_6$, $I_9$ and $I_{12}$ as given in \cite{BH}:
  \[
  \Delta_{333} = I_6^3 I_9^2 - I_6^2 I_{12}^2 + 36 \, I_6 I_9^2 I_{12} + 108 \, I_9^4 - 32 \, I_{12}^3.
  \]
\end{theorem}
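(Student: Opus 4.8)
The plan is to exploit equation \eqref{hdformula} together with the characterization of hyperdeterminant-zero arrays in the Lemma of \S\ref{gaussianelimination}. Every array $M=(x_{ijk})$ whose hyperdeterminant vanishes lies in the dual variety $X^\vee$, and hence satisfies the single homogeneous linear relation
\[
\begin{aligned}
&a\,I_6(M)^6 + b\,I_6(M)^4 I_{12}(M) + c\,I_6(M)^3 I_9(M)^2 + d\,I_6(M)^2 I_{12}(M)^2 \\
&\quad + e\,I_6(M)\,I_9(M)^2 I_{12}(M) + f\,I_9(M)^4 + g\,I_{12}(M)^3 = 0
\end{aligned}
\]
in the seven unknowns $a,\dots,g$. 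By part (iii) of that Lemma, with $k=3$ I may produce such arrays explicitly: set to zero the seven entries $x_{ijk}$ having at least two indices equal to $1$, and fill the remaining twenty entries with pseudorandom scalars. Because $I_6,I_9,I_{12}$ are invariants and \eqref{hdformula} is a polynomial identity, there is no need to follow this with random changes of basis along the three directions; the reduced arrays already suffice.

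First I would assemble a large batch of such arrays, substitute each into the fundamental invariants of \cite{BH} to obtain the numbers $I_6(M),I_9(M),I_{12}(M)$, form the seven degree-$36$ monomials, and stack the resulting linear equations into a coefficient matrix whose null space contains $(a,\dots,g)$. That the null space is exactly one-dimensional follows from theory: by Vinberg's theorem \cite{V} the invariants $I_6,I_9,I_{12}$ are algebraically independent, so the seven monomials span the full degree-$36$ part of the invariant ring; and since $X^\vee$ is an irreducible hypersurface whose minimal defining polynomial $\Delta_{333}$ has degree exactly $36$, any degree-$36$ invariant vanishing identically on $X^\vee$ is a scalar multiple of $\Delta_{333}$. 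Thus the coefficients are determined up to a common scalar. In practice I would sample well more than six arrays and check that the coefficient matrix attains rank six, confirming that finitely many generic points of $X^\vee$ already cut the kernel down to this one-dimensional space.

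The chief obstacle is the sheer size of the computation: $I_{12}$ alone has $209061$ terms, and expanding the degree-$36$ monomials symbolically is infeasible. I would therefore carry out everything above over several finite fields $\mathbb{F}_p$: generate the arrays, evaluate the invariants, and solve the linear system modulo each prime $p$, normalizing one coordinate (say the coefficient of $I_6^3 I_9^2$) to $1$. The rational coefficients are then recovered from their modular images by the Chinese Remainder Theorem together with rational reconstruction, using enough primes that the reconstruction stabilizes; some care is needed to discard any unlucky prime that causes a drop in rank.

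Finally, the null space fixes the coefficients only up to scale, so one further datum is required to pin down the absolute normalization and sign asserted in the theorem. For this I would evaluate the primitive hyperdeterminant at a single array $M_0$ with $\Delta_{333}(M_0)\ne 0$ --- for instance by Schl\"afli's method as in \cite{Oed12} --- and compare with the right-hand side of \eqref{hdformula} evaluated at $M_0$. Agreement with the coefficient vector $(a,b,c,d,e,f,g)=(0,0,1,-1,36,108,-32)$ then confirms both the ratios extracted from the null space and the overall scalar, completing the identification.
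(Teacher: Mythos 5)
Your proposal is correct and is essentially the paper's own method: both generate hyperdeterminant-zero arrays by zeroing the entries $x_{ijk}$ having at least two indices equal to $1$, evaluate the fundamental invariants at pseudorandom such arrays, and solve the resulting homogeneous linear system for $a,\dots,g$ modulo a prime followed by rational reconstruction, relying on the fact that the nullspace is one-dimensional and spanned by the coefficient vector of $\Delta_{333}$. The only differences are implementation details: the paper uses the single prime $p = 10007$ and then reconfirms the nullspace vector with exact integer arithmetic in \S\ref{ranksection} (rather than CRT over several primes), and it fixes the overall scale simply by clearing denominators (multiplying by $-32$) rather than by an independent Schl\"afli-style evaluation of $\Delta_{333}$ at a nonsingular array.
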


\begin{proof}
Using the computer algebra system Maple, we generate a pseudorandom $3 \times 3 \times 3$ array $M$
with entries in the field with $p = 10007$ elements (the smallest prime $p > 10000$).
We then set the $(i,j,k)$ entry equal to 0 whenever at least two of the indices $i,j,k$ equal 1.
We compute the values modulo $p$ of the fundamental invariants $I_6, I_9, I_{12}$ and then of the invariant
monomials in equation \eqref{hdformula}.
If any value is 0, we repeat the process until all values are nonzero.

\begin{table}[h]
\[
\left[
\begin{array}{rrrrrrr}
3477 & 3766 & 6420 & 5472 & 4726 & 6898 & 9864 \\
2455 & 1031 & 7558 & 7819 &  121 & 7526 &  198 \\
4162 & 6933 & 2573 & 8969 & 2490 & 2877 & 4806 \\
2161 & 6223 & 4702 & 3440 & 3913 & 5827 & 5970 \\
9488 & 8055 & 5201 & 3051 &  550 & 2523 & 4823 \\
1465 & 9034 & 3812 & 4731 & 8452 & 4714 & 8190 \\
9339 & 6402 & 4716 & 3540 & 6875 & 1041 & 6281 \\
1722 &  411 & 9753 &  255 & 5832 & 3408 & 1473 \\
1387 & 1406 & 9172 & 2659 & 6556 & 5416 & 3655 \\
2261 & 1727 & 2982 & 5807 & 8474 & 8797 &   45
\end{array}
\right]
\qquad
\left[
\begin{array}{rrrrrrr}
1 & 0 & 0 & 0 & 0 & 0 &    0 \\
0 & 1 & 0 & 0 & 0 & 0 &    0 \\
0 & 0 & 1 & 0 & 0 & 0 & 7818 \\
0 & 0 & 0 & 1 & 0 & 0 & 2189 \\
0 & 0 & 0 & 0 & 1 & 0 & 1252 \\
0 & 0 & 0 & 0 & 0 & 1 & 3756
\end{array}
\right]
\]
\smallskip
\caption{Modular values of invariant monomials, and RCF}
\label{ptable}
\end{table}

We perform this procedure 10 times and obtain the first matrix in Table \ref{ptable};
this is the coefficient matrix modulo $p$ of a homogeneous linear system in the variables
$a, b, c, d, e, f, g$ in equation \eqref{hdformula}.
If we had done this computation using rational arithmetic, and had generated enough pseudorandom arrays,
the matrix would have a nullspace of dimension 1 with the hyperdeterminant as basis.
Hence using modular arithmetic, the nullity is at least 1.
In fact, we find that the nullity is exactly 1; the row canonical form (RCF) of the matrix modulo $p$ is the second matrix
in Table \ref{ptable}, and
the vector $[ 0, 0, 2189, 7818, 8755, 6251, 1 ]$ is a basis of the nullspace.
Using the Maple procedure \texttt{iratrecon} for rational reconstruction, we find that the simplest
rational numbers corresponding to these residue classes modulo $p$ are
  \begin{equation}
  \label{coef0}
  [a,b,c,d,e,f,g] =
  \left[ \;
  0, \; 0, \; -\frac{1}{32}, \; \frac{1}{32}, \; -\frac98, \; -\frac{27}{8}, \; 1
  \; \right]
  \end{equation}
We multiply by $-32$ to obtain the coefficients in the statement of this theorem.
We will confirm this calculation using rational arithmetic in Section \ref{ranksection}.
\end{proof}


\section{An application to Nurmiev's normal forms} \label{nurmiev}

Nurmiev \cite{N} classified the normal forms of $3 \times 3 \times 3$ arrays over $\mathbb{C}$ with
respect to the action of the Lie group
$G = SL_3(\mathbb{C}) \times SL_3(\mathbb{C}) \times SL_3(\mathbb{C})$, and in Nurmiev \cite{N2} described the orbit closure poset.

\begin{definition}
The $3 \times 3 \times 3$ array $M$ is called \textbf{semisimple} if its $G$-orbit is closed,
and \textbf{nilpotent} if the closure of its $G$-orbit contains the zero array.
\end{definition}

\begin{remark}
The variety where all the invariants vanish is called the nullcone \cite{Mumford}, and it coincides exactly
with the variety of nilpotent arrays.
It is immediate that tensors of rank $\le 2$ are nilpotent.
\end{remark}

Every $3 \times 3 \times 3$ array $M$ can be written uniquely in the form $M = S + N$ where
$S$ is semisimple, $N$ is nilpotent, and
$[S,N] = 0$ where the Lie bracket is taken from Vinberg's embedding of the vector space of $3 \times 3 \times 3$ arrays
into a simple Lie algebra of
type $E_6$.
Nurmiev defines three basic semisimple arrays; we use the notation $E_{ijk}$ for the array with 1 in position
$(i,j,k)$ and 0 in the other positions:
  \[
  X_1 = E_{111} + E_{222} + E_{333},
  \qquad
  X_2 = E_{123} + E_{231} + E_{312},
  \qquad
  X_3 = E_{132} + E_{213} + E_{321}.
  \]
Nurmiev shows that every semisimple array is equivalent to (i.e.~lies in the $G$-orbit of) an array of the normal form
  \[
  u = a_1 X_1 + a_2 X_2 + a_3 X_3.
  \]
For each type of orbit, non-vanishing of an invariant at a particular point is always a certainty, so evaluating at
pseudorandom points is sufficient.  To be certain of vanishing for all points on a parametrized orbit, we must check the
expression for the invariant restricted to the parametrized normal form.
Evaluating the three fundamental invariants and the hyperdeterminant on $u$ gives the following results:
\begin{equation}\label{eq:array}
\left.
\begin{array}{rll}
  I_6 &=
  a_1^6 + a_2^6 + a_3^6 - 10a_1^3a_2^3 - 10a_1^3a_3^3 - 10a_2^3a_3^3,
  \\[4pt]
  I_9 &=
  - (a_1 - a_2)(a_1 - a_3)(a_2 - a_3)
  (a_1^2 + a_1a_2 + a_2^2)(a_1^2 + a_1a_3 + a_3^2)(a_2^2 + a_2a_3 + a_3^2),
  \\[4pt]
  I_{12} &=
  a_1^3a_2^9 + a_1^9a_2^3 + a_1^9a_3^3 + a_1^3a_3^9 + a_2^3a_3^9 + a_2^9a_3^3
  - 4a_1^6a_2^6 - 4a_1^6a_3^6 - 4a_2^6a_3^6
  \\[4pt]
  &\qquad
  + 2a_1^3a_2^3a_3^6 + 2a_1^3a_2^6a_3^3 + 2a_1^6a_2^3a_3^3,
  \\[4pt]
  \Delta &=
  - 4a_1^3a_2^3a_3^3 (a_1 + a_2 + a_3)^3
  (a_1^2 + a_2^2 + a_3^2 + 2a_1a_2 - a_1a_3 - a_2a_3)^3
  \,\times
  \\[4pt]
  &\qquad
  (a_1^2 + a_2^2 + a_3^2 - a_1a_2 + 2a_1a_3 - a_2a_3)^3
  (a_1^2 + a_2^2 + a_3^2 - a_1a_2 - a_1a_3 + 2a_2a_3)^3
  \,\times
  \\[4pt]
  &\qquad
  (a_1^2 + a_2^2 + a_3^2 - a_1a_2 - a_2a_3 - a_1a_3)^3.
\end{array}
\quad\right\}
\end{equation}
We note that $I_9$ vanishes whenever two of the coefficients $a_1, a_2, a_3$ are equal,
and that $\Delta$ vanishes whenever any coefficient is 0 or the sum $a_1 + a_2 + a_3$ is 0.

According to the Nurmiev classification, the normal forms belong to five families.

\subsection{First family}
The coefficients of the semisimple part satisfy
  \[
  a_1 a_2 a_3 \ne 0,
  \qquad
  ( a_1^3 + a_2^3 + a_3^3 )^3 - ( 3 a_1 a_2 a_3 )^3 \ne 0.
  \]
The nilpotent part is 0, so the values of the invariants on this family are as in \eqref{eq:array}.

\subsection{Second family}
The coefficients of the semisimple part satisfy
  \[
  a_2 ( a_1^3 + a_2^3 ) \ne 0,
  \qquad
  a_3 = 0.
  \]
The possible nilpotent parts are
  \[
  E_{132} + E_{213}, \qquad E_{132}, \qquad 0.
  \]
It turns out that the values of the fundamental invariants and the hyperdeterminant do not depend on the nilpotent part,
and for the second family we find that:
  \begin{align*}
  &
  I_6 = a_1^6 + a_2^6 - 10a_1^3a_2^3,
  \qquad
  I_9 =  - a_1^3a_2^3(a_1 - a_2)(a_1^2 + a_1a_2 + a_2^2),
  \\
  &
  I_{12} = a_1^3a_2^3(a_1^6 + a_2^6 - 4a_1^3a_2^3),
  \qquad
  \Delta = 0.
  \end{align*}

\subsection{Third family}
The coefficients of the semisimple part satisfy
  \[
  a_1 \ne 0, \qquad a_2 = a_3 = 0.
  \]
The possible nilpotent parts are
  \begin{align*}
  &
  E_{123} + E_{132} + E_{213} + E_{231},
  \quad
  E_{123} + E_{132} + E_{213},
  \quad
  E_{123} + E_{132} + E_{231},
  \\
  &
  E_{123} + E_{132},
  \quad
  E_{123} + E_{231},
  \quad
  E_{132} + E_{213},
  \quad
  E_{123},
  \quad
  E_{132},
  \quad
  0.
  \end{align*}
We find that the values of the fundamental invariants and the hyperdeterminant do not depend on the nilpotent part;
for all normal forms in this family
we have $I_6 = a_1^6$ and $I_9 = I_{12} = \Delta = 0$.

\subsection{Fourth family}
The coefficients of the semisimple part satisfy
  \[
  a_1 = 0, \qquad a_3 = -a_2 \ne 0.
  \]
The possible nilpotent parts are
  \begin{align*}
  &
  E_{113} + E_{131} + E_{222} + E_{311},
  \quad
  E_{113} + E_{122} + E_{131} + E_{212} + E_{221} + E_{311},
  \\
  &
  E_{111} + E_{222},
  \quad
  E_{112} + E_{121} + E_{211},
  \quad
  E_{111},
  \quad
  0.
  \end{align*}
The values of the fundamental invariants and the hyperdeterminant do not depend on the nilpotent part;
for all normal forms in this family we have
  \[
  I_6 = 12 a_2^6, \qquad
  I_9 = 2 a_2^9, \qquad
  I_{12} = -6 a_2^{12}, \qquad
  \Delta = 0.
  \]

\subsection{Fifth family}
This family contains 24 nonzero nilpotent arrays (the semisimple part is 0); see Nurmiev \cite[Table 4, p.~723]{N}.
In every case all the fundamental invariants and the hyperdeterminant are 0.
To put this another way, we have verified that all nilpotent orbits consist of singular arrays
(from the point of view of hyperdeterminants).

\begin{remark}
These results on the evaluation of the invariants on the normal forms can be used to corroborate results
in the second paper of Nurmiev \cite{N2}
on the structure of the poset of the closures of nilpotent orbits.
\end{remark}


\section{Values of the invariants on arrays of known rank} \label{ranksection}
In this section we determine which invariants vanish on arrays of each possible rank.  In principle this could be deduced from our previous computations by determining the rank of each of Nurmiev's normal forms, but we prefer to argue directly.

Kruskal's theorem \cite{BH2,K1,K2} states that every $3 \times 3 \times 3$ array over $\mathbb{R}$ or $\mathbb{C}$ has rank at most 5.
To parametrize an array of rank $r = 1$ (strictly speaking, $r \le 1$) we take the outer product of three vectors:
$x_{ijk} = a_i b_j c_k$. To make the array pseudorandom, we assign pseudorandom values to each of the coordinates of each of the vectors in the outer product.
Then to generate parametrized (respectively pseudorandom)  arrays of rank $r \le 5$ we take the sum of $r$ parametrized (respectively pseudorandom) arrays of rank 1.

Let $\sigma_{r}^{o}$ denote the set of arrays of rank $\leq r$ and let $\sigma_{r}$ denote its (Zariski) closure. 
It is well known that $\sigma_{r}\subset \sigma_{r+1}$ for all $r$, and obviously $\sigma_{r}^{o}\subset \sigma_{r+1}^{o}$;
however it can be that $\sigma_{r}^{o} \subsetneq \sigma_{r}$.  The commonly used example is $x_{211}+x_{121}+x_{112}$, which is in $\sigma_{2}$ (and $\sigma_{3}^{o}$) but not in $\sigma_{2}^{o}$. We also note that since the zero-locus of an invariant is both (Zariski) closed and invariant under the group action, if an invariant vanishes at one point of an orbit, then it does so for all points in the orbit, and for all points in the orbit closure.  We use this fact implicitly in what follows.

The arrays of rank 1 form a single orbit, and we may test our invariants on one representative, such as $[1,0,0]^{\otimes 3}$. 

For $\sigma_{2}^{o}$, we may consider the representative $[1,0,0]^{\otimes 3} + [0,1,0]^{\otimes 3}$. 
All other arrays of rank 2 are in the closure of the orbit of this array.  
Our tests find that all three fundamental invariants vanish on these representatives.

Similarly, for $\sigma_{3}^{o}$ we may consider the representative $[1,0,0]^{\otimes 3} + [0,1,0]^{\otimes 3} + [0,0,1]^{\otimes 3}$. 
All other arrays of rank 3 are in the closure of the orbit of this array.  
We find that $I_{6}$ does not vanish on this representative, but $I_{9}, I_{12}$ and $\Delta$ do.
Experimental results for 10000 pseudorandom arrays with entries in $\{ -9, \dots, 9 \}$ and rank $\le 3$
showed that $I_6$ vanishes in only 188 cases, but $I_9$ and $I_{12}$ vanish in all 10000 cases.

For $\sigma_{4}^{o}$, the value of $I_{6}$ was already non-zero for rank $3$, so it must not vanish identically on rank 4 arrays. 
It was already determined by Strassen \cite{S} that $I_{9}$ vanishes on arrays of rank $\le 4$.
Experimental results for 10000 pseudorandom arrays with entries in $\{ -9, \dots, 9 \}$ and rank $\le 4$
showed that $I_6$ vanishes in only 4 cases, $I_9$ vanishes in all 10000 cases, and $I_{12}$ vanishes in 573 cases.
In fact, just one non-vanishing result is sufficient to indicate that $I_{12}$ does not vanish identically on the locus of rank 4 arrays. 
A similar test also shows that $\Delta$ does not vanish identically for rank 4 arrays.  
The non-vanishing of $I_{12}$ and $\Delta$ on $\sigma_{4}^{o}$ can also be deduced from Strassen's result 
that $\sigma_{4}$ is an irreducible hypersurface defined by $I_{9}$ and that neither $I_{12}$ nor $\Delta$ are multiples of $I_{9}$.

Strassen showed that his degree 9 invariant was non-zero for rank 5 arrays, thus proving that his invariant was non-trivial. 
The closure of rank 5 arrays is all of the ambient projective space, so no non-trivial invariant vanishes identically on all rank 5 arrays.
Experimental results for 10000 pseudorandom arrays with entries in $\{ -9, \dots, 9 \}$ and rank $\le 5$
showed that $I_6$ never vanishes, $I_9$ vanishes in 47 cases, and $I_{12}$ vanishes in 23 cases.
Again, just one non-vanishing example is sufficient to prove that the invariant is not identically zero on $\sigma_{5}^{o}$.
We summarize these results in Table \ref{ranktable}.

\begin{table}[h]
\[
\begin{array}{l|rrrrr}
       &\quad r \le 1 &\quad r \le 2 &\quad r \le 3   &\quad r \le 4   &\quad r \le 5   \\ \midrule
I_6    &\quad   0 &\quad   0 &\quad \ne 0 &\quad \ne 0 &\quad \ne 0 \\
I_9    &\quad   0 &\quad   0 &\quad     0 &\quad   0   &\quad \ne 0 \\
I_{12} &\quad   0 &\quad   0 &\quad     0 &\quad \ne 0 &\quad \ne 0 \\
\Delta     &\quad   0 &\quad   0 &\quad     0 &\quad \ne 0 &\quad \ne 0
\end{array}
\]
\smallskip
\caption{Values of invariants on arrays of rank $r$}
\label{ranktable}
\end{table}

We can use this method of pseudorandom arrays to reduce the size of the matrix entries when we perform the computation of \S\ref{hdsection}
using rational arithmetic.
There are 7 nonzero vectors of dimension 3 with entries in $\{0,1\}$, and hence $7^3 = 343$ nonzero
$3 \times 3 \times 3$ arrays of rank 1.
We use a pseudorandom number generator to choose 5 of these arrays; their sum $X = ( x_{ijk} )$ is an array of rank $\le 5$,
which has rank 5 with sufficiently high probability; in this case, all three fundamental invariants will have nonzero values on $X$.
We set $x_{ijk} = 0$ whenever at least two of $i,j,k$ equal 1, to ensure that $X$ has hyperdeterminant 0.
Following the same procedure as in \S\ref{hdsection}, we obtain the matrix in Table \ref{char0matrix}.
The vector of equation \eqref{coef0} is a basis for the nullspace, confirming Theorem \ref{maintheorem}
with rational arithmetic.

\begin{table}[h]
\small
\[
\left[
\begin{array}{rrrrrrr}
12230590464 &\! -509607936 &\! 28311552 &\! 21233664 &\! -1179648 &\! 65536 &\! -884736 \\
64000000 &\! -2560000 &\! 128000 &\! 102400 &\! -5120 &\! 256 &\! -4096 \\
2985984 &\! -124416 &\! 6912 &\! 5184 &\! -288 &\! 16 &\! -216 \\
75418890625 &\! -2427685000 &\! 17576000 &\! 78145600 &\! -565760 &\! 4096 &\! -2515456 \\
12230590464 &\! -509607936 &\! 28311552 &\! 21233664 &\! -1179648 &\! 65536 &\! -884736 \\
531441 &\! -19683 &\! 729 &\! 729 &\! -27 &\! 1 &\! -27 \\4096 &\! 512 &\! -256 &\! 64 &\! -32 &\! 16 &\! 8 \\
13841287201 &\! -449654478 &\! 4235364 &\! 14607684 &\! -137592 &\! 1296 &\! -474552 \\
2985984 &\! -124416 &\! 6912 &\! 5184 &\! -288 &\! 16 &\! -216 \\
11390625 &\! -303750 &\! -13500 &\! 8100 &\! 360 &\! 16 &\! -216
\end{array}
\right]
\]
\smallskip
\caption{Integer values of invariant monomials}
\label{char0matrix}
\end{table}


\end{document}